\documentclass[11pt,amssymb]{amsart}

\textwidth16truecm\hoffset-1.9truecm  
\textheight23truecm\voffset-1.6truecm
\headsep24pt                    

\usepackage{txfonts}

\usepackage{mathtools} 


\usepackage{amsmath}
\usepackage{amssymb}
\usepackage{amsthm}
\usepackage{newlfont}
\usepackage{graphicx}
\newtheorem{thm}{Theorem}[section]

\newtheorem{lem}[thm]{Lemma}

\theoremstyle{definition}

\theoremstyle{remark}
\newtheorem{rem}[thm]{Remark}
\numberwithin{equation}{section}

\newcommand{\bes}[1]{\dot B^{#1}_{\infty,\infty}}


\newcommand{\ed}{\end {document}}


\title[Besov regularity]
{Small $\dot B^{-1}_{\infty,\infty}$ implies regularity}

\author[T. Hmidi]{Taoufik Hmidi}
\address[T. Hmidi]{IRMAR, Universit\'e de Rennes 1\\ Campus de
Beaulieu\\ 35~042 Rennes cedex\\ France}
\email{thmidi@univ-rennes1.fr}

\author[D. Li]{Dong Li}
\address[D. Li]{Department of Mathematics, University of British Columbia, Vancouver BC Canada V6T 1Z2}%
\email{dli@math.ubc.ca}

\begin{document}
\begin{abstract}
We show that smallness of $\dot B^{-1}_{\infty,\infty}$ norm of solution to $d$-dimensional
($d\ge 3$) incompressible
Navier-Stokes prevents blowups.
\end{abstract}
\maketitle
\section{Introduction}
In recent \cite{FGL16}, Farhat, Gruji\'c and Leitmeyer proved that any unique $L^{\infty}$
mild solution to 3D Navier-Stokes equation cannot develop finite-time blowups if the
$B^{-1}_{\infty,\infty}$ norm is sufficiently small (near first possible blowup time). This
result is perhaps a bit surprising in view of the  illposedness result of 
Bourgain-Pavlovi\'c \cite{BP08}. The proof in \cite{FGL16} has a strong geometric flavor, 
and in particular relies on a geometric regularity
criteria and characterization of the super-level sets developed in the series of works 
\cite{DaGr12-3,Gr13,GrKu98}. We refer the readers to the introduction in \cite{FGL16} and
the references therein (see also \cite{AG13}--\cite{Le34})
for more details on these techniques and also related developments. 
The purpose of this note is to revisit this problem from
the point of view of Littlewood-Paley calculus. In particular we will give a streamlined 
proof for all dimensions $d\ge 3$.

Consider $d$-dimensional Navier-Stokes Equation (NSE):
\begin{align} \label{e1}
\begin{cases}
\partial_t v  + (v \cdot \nabla) v = \Delta v - \nabla p, 
\quad (t,x) \in (0,\infty)\times \mathbb R^d, \\
\nabla \cdot v =0, \\
v\Bigr|_{t=0}=v_0.
\end{cases}
\end{align}

\begin{thm}\label{thm1}
Let $d\ge 3$. Suppose $v$ is a smooth solution to \eqref{e1} and let $T>0$ be the first possible blow-up time. 
There exists a positive constant $m_0$ depending only on the dimension $d$,
 such that if the solution $v$
satisfies
\[ 
 \sup_{t \in (T-\epsilon, T)} \|v(t)\|_{\dot B^{-1}_{\infty, \infty}} \le m_0,
\]
for some $0 < \epsilon <T$, then $T$ is not a blow-up time, and the solution
can be continued past $T$.
\end{thm}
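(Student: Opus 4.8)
The plan is to run a Littlewood--Paley/paraproduct energy estimate in a subcritical homogeneous Sobolev space, and to exploit the fact that, once $\|v\|_{\dot B^{-1}_{\infty,\infty}}$ is small, the quadratic term is swallowed by the viscous dissipation. I would fix a subcritical exponent $s$, i.e.\ $s>d/2-1$, say $s=d/2$; note that $d\ge 3$ forces $d/2-1>0$, so automatically $s>0$, and this positivity is what makes the nonlinear estimate below close. By the standard local well-posedness theory for \eqref{e1} in $H^s$ with $s>d/2-1$, together with the energy bound $\|v(t)\|_{L^2}\le\|v(t_0)\|_{L^2}$, it is enough to prove that $\sup_{t\in(T-\epsilon,T)}\|v(t)\|_{\dot H^s}<\infty$: this gives a uniform lower bound for the life-span of the Navier--Stokes flow issued from any time close to $T$, contradicting maximality of $T$, and a routine bootstrap then propagates full smoothness past $T$.

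First I would carry out the basic $\dot H^s$ energy estimate: applying $|\nabla|^s$ to \eqref{e1}, pairing with $|\nabla|^s v$ in $L^2$, and using $\nabla\cdot v=0$ to annihilate the pressure term, one obtains
\[
\tfrac12\frac{d}{dt}\|v\|_{\dot H^s}^2+\|v\|_{\dot H^{s+1}}^2=-\big\langle |\nabla|^s(v\cdot\nabla v),\,|\nabla|^s v\big\rangle_{L^2}.
\]
Writing $v\cdot\nabla v=\mathrm{div}(v\otimes v)$ and integrating one derivative by parts, the right-hand side is at most $\|v\otimes v\|_{\dot H^s}\,\|v\|_{\dot H^{s+1}}$, so everything reduces to the bilinear estimate
\[
\|v\otimes v\|_{\dot H^s}\ \lesssim_d\ \|v\|_{\dot B^{-1}_{\infty,\infty}}\,\|v\|_{\dot H^{s+1}}.
\]

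This bilinear estimate is the heart of the matter, and I would prove it via Bony's paraproduct decomposition, writing $v\otimes v$ as two paraproducts plus the remainder $R(v,v)=\sum_k\sum_{|k-k'|\le1}\Delta_k v\otimes\Delta_{k'}v$, with $\Delta_k$ the Littlewood--Paley projections. For the paraproduct pieces the decisive --- and structural --- point, which is exactly why the relevant norm is $\dot B^{-1}_{\infty,\infty}$ and not $\dot B^0_{\infty,\infty}$, is the geometric summation of the low frequencies:
\[
\|S_{k-1}v\|_{L^\infty}\le\sum_{k'\le k-2}\|\Delta_{k'}v\|_{L^\infty}\le\Big(\sum_{k'\le k-2}2^{k'}\Big)\|v\|_{\dot B^{-1}_{\infty,\infty}}\lesssim 2^k\|v\|_{\dot B^{-1}_{\infty,\infty}};
\]
combined with H\"older this bounds the frequency-$2^j$ part of each paraproduct in $L^2$ by $\|v\|_{\dot B^{-1}_{\infty,\infty}}\,2^j\|\Delta_j v\|_{L^2}$, up to a harmless frequency shift, hence the claimed estimate after taking the $\ell^2(2^{js})$ norm. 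For the remainder one gets $\|\Delta_j R(v,v)\|_{L^2}\lesssim\|v\|_{\dot B^{-1}_{\infty,\infty}}\sum_{k\ge j-3}2^k\|\Delta_k v\|_{L^2}$, and summing $2^{js}$ times this against the $\|\Delta_j v\|_{L^2}$ factors calls for a Schur estimate on the lower-triangular kernel $2^{s(j-k)}\mathbf 1_{\{k\ge j-3\}}$ --- and it is exactly here that $s>0$ enters (this is the term that fails at the critical level in $d=2$). Feeding the bilinear estimate back into the energy identity gives
\[
\tfrac12\frac{d}{dt}\|v\|_{\dot H^s}^2+\|v\|_{\dot H^{s+1}}^2\le C_d\,\|v\|_{\dot B^{-1}_{\infty,\infty}}\,\|v\|_{\dot H^{s+1}}^2,
\]
so taking $m_0=m_0(d):=1/(2C_d)$ and using $\|v(t)\|_{\dot B^{-1}_{\infty,\infty}}\le m_0$ on $(T-\epsilon,T)$ forces $\frac{d}{dt}\|v(t)\|_{\dot H^s}^2\le-\|v(t)\|_{\dot H^{s+1}}^2\le 0$ there. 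Since $v$ is smooth, $\|v(t_0)\|_{\dot H^s}<\infty$ for any fixed $t_0\in(T-\epsilon,T)$, and monotonicity gives $\sup_{t\in[t_0,T)}\|v(t)\|_{\dot H^s}\le\|v(t_0)\|_{\dot H^s}<\infty$, as required.

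The hard part is essentially just the bilinear estimate --- precisely, arranging that the two derivatives provided by the dissipation $\|v\|_{\dot H^{s+1}}^2$ are exactly the two that are ``spent'' in $\|v\otimes v\|_{\dot H^s}\lesssim\|v\|_{\dot B^{-1}_{\infty,\infty}}\|v\|_{\dot H^{s+1}}$ (one from $\mathrm{div}$, one from the regularity deficit $-1$ of the $\dot B^{-1}_{\infty,\infty}$ norm), and controlling the high--high remainder, the only piece that is not diagonal in frequency and the one that forces $s>0$ --- which is the role of the hypothesis $d\ge 3$. Beyond this, one should verify that the homogeneous-space manipulations are legitimate, which follows from the assumed smoothness and spatial decay of $v$ on $[0,T)$; and that the constant $C_d$ coming out of the paraproduct and Schur estimates depends only on $d$ (through the fixed choice of $s$), so that the smallness threshold $m_0$ depends on $d$ alone, as claimed.
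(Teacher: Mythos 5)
Your argument is correct in its essentials, but it takes a genuinely different route from the paper. The paper works entirely in $L^\infty$-based spaces: it propagates the subcritical norm $\|v\|_{\dot B^{\gamma}_{\infty,\infty}}$, $\gamma=3/2$, via the Duhamel formula, using the pointwise product estimate $\|P_j((v\cdot\nabla)v)\|_\infty\lesssim 2^{j(2-\gamma)}\|v\|_{\dot B^{-1}_{\infty,\infty}}\|v\|_{\dot B^{\gamma}_{\infty,\infty}}$ and the heat-kernel decay $e^{-c2^{2j}(t-s)}$ to absorb the two derivatives lost in the nonlinearity. You instead propagate $\|v\|_{\dot H^s}$, $s=d/2$, by an energy estimate, absorbing the nonlinearity into the dissipation $\|v\|_{\dot H^{s+1}}^2$ via the bilinear bound $\|v\otimes v\|_{\dot H^s}\lesssim\|v\|_{\dot B^{-1}_{\infty,\infty}}\|v\|_{\dot H^{s+1}}$. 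The core mechanism is identical in both proofs --- the low-frequency summation $\|S_{k-1}v\|_{L^\infty}\lesssim 2^k\|v\|_{\dot B^{-1}_{\infty,\infty}}$ shows the nonlinearity costs exactly the two derivatives that the Laplacian supplies --- and your paraproduct, remainder, and Schur-test steps all check out (the condition $s>0$ you isolate for the high--high remainder plays the role of the paper's condition $\gamma>1$). Your version even yields monotonicity of $\|v(t)\|_{\dot H^s}$ rather than just a uniform bound.

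The one substantive caveat is the solution class. You need $v(t_0)\in H^s$ at some $t_0\in(T-\epsilon,T)$, and you justify this by ``the assumed smoothness and spatial decay of $v$'' --- but no spatial decay is assumed in the theorem, and the paper's remark makes clear that the intended examples include mild solutions with merely $L^\infty$ (hence possibly non-decaying) data, for which $\|v(t_0)\|_{L^2}$ and $\|v(t_0)\|_{\dot H^s}$ may be infinite. The paper's $L^\infty$-based Besov scheme covers that case; your $L^2$-based energy scheme does not, so as written your proof establishes the theorem only for finite-energy (e.g.\ $H^s$ or Leray--Hopf) smooth solutions. This is a restriction of scope rather than an error, but you should either state the hypothesis $v(t_0)\in H^s$ explicitly or switch the propagated norm to an $L^\infty$-based one to recover the full generality.
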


\begin{rem}
Here to allow some generality we do not specify the particular class of smooth solution.
As an example one can consider as in \cite{FGL16} the unique mild solution emanating
from $L^{\infty}$ initial data. By smoothing (cf. \cite{DLCMS}) the solution is immediately in $W^{k,\infty}$
for all $k$. Other classes of solutions can also be considered and we will not dwell
on this issue here.

\end{rem}

We gather below some notation used in this note.
\subsection*{Notation}
For any two quantities $X$ and $Y$, we denote $X \lesssim Y$ if
$X \le C Y$ for some constant $C>0$. The dependence of the constant $C$ on
other parameters or constants are usually clear from the context and
we will often suppress  this dependence.

We will need to use the Littlewood--Paley (LP) frequency projection
operators. To fix the notation, let $\phi_0 \in
C_c^\infty(\mathbb{R}^n )$ and satisfy
\begin{equation}\nonumber
0 \leq \phi_0 \leq 1,\quad \phi_0(\xi) = 1\ {\text{ for}}\ |\xi| \leq
1,\quad \phi_0(\xi) = 0\ {\text{ for}}\ |\xi| \geq 7/6.
\end{equation}
Let $\phi(\xi):= \phi_0(\xi) - \phi_0(2\xi)$ which is supported in $\frac 12 \le |\xi| \le \frac 76$.
For any $f \in \mathcal S(\mathbb R^n)$, $j \in \mathbb Z$, define
\begin{align*}
 &\widehat{P_{\le j} f} (\xi) = \phi_0(2^{-j} \xi) \hat f(\xi), \\
 &\widehat{P_j f} (\xi) = \phi(2^{-j} \xi) \hat f(\xi), \qquad \xi \in \mathbb R^n.
\end{align*}
Sometimes for simplicity we write $f_j = P_j f$, $f_{\le j} = P_{\le j} f$. Note that by using the support property of $\phi$, we have $P_j P_{j^{\prime}} =0$ whenever $|j-j^{\prime}|>1$.
The Bony paraproduct for a pair of functions
$f,g$ take the form
\begin{align*}
f g = \sum_{i \in \mathbb Z} f_i \tilde g_i + \sum_{i \in \mathbb Z} f_i g_{\le i-2} + \sum_{i \in \mathbb Z}
g_i f_{\le i-2},
\end{align*}
where $\tilde g_i = g_{i-1} +g_i + g_{i+1}$.
For $s\in \mathbb R$, $1\le p\le \infty$, the homogeneous Besov $\dot B^s_{\infty,\infty}$ norm is given by
\begin{align*}
 \| f \|_{\dot B^s_{\infty,\infty} } 
 = \sup_{j\in \mathbb Z}  \bigl( 2^{js} \| P_j f \|_{\infty} \bigr).
\end{align*}
We will use without explicit mentioning the simple estimate:
\begin{align*}
\| e^{t \Delta} P_j f \|_{L^{\infty}(\mathbb R^d)}
 \lesssim e^{-c 2^{2j}t} \| P_j f\|_{L^{\infty}(\mathbb R^d)},\quad \forall\, t>0,
\end{align*}
where $c>0$ is a constant depending only on $d$.


\section{Proof of Theorem \ref{thm1}}

\begin{lem} \label{lem1}
Let $\gamma>1$. Then for any $j \in \mathbb Z$, we have
\begin{align*}
\| P_j ( ( v\cdot \nabla) v ) \|_{\infty}
\lesssim 2^{j(2-\gamma)} \| v \|_{\bes{-1}} \| v \|_{\bes{\gamma}}.
\end{align*}
\end{lem}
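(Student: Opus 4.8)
The plan is to use the divergence-free condition to recast the nonlinearity in divergence form and then run a standard Littlewood--Paley/Bony paraproduct analysis. Since $\nabla\cdot v=0$, one has $(v\cdot\nabla)v=\nabla\cdot(v\otimes v)$, hence $P_j((v\cdot\nabla)v)=\nabla\cdot P_j(v\otimes v)$, and by Bernstein's inequality $\|P_j((v\cdot\nabla)v)\|_\infty\lsm 2^j\|P_j(v\otimes v)\|_\infty$. So it suffices to prove, working componentwise (i.e.\ for a scalar product $fg$ with $f,g\in\{v_1,\dots,v_d\}$),
\[
\|P_j(fg)\|_\infty \lsm 2^{j(1-\gamma)}\,\|v\|_{\bes{-1}}\,\|v\|_{\bes{\gamma}}.
\]

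Next I would decompose $fg$ via the Bony paraproduct recalled in the introduction into the high--high term $\sum_i f_i\tilde g_i$ and the two low--high terms $\sum_i f_i g_{\le i-2}$ and $\sum_i g_i f_{\le i-2}$, and estimate $P_j$ of each. For the high--high piece, $f_i\tilde g_i$ has spectrum in a ball of radius $\sim 2^i$, so $P_j(f_i\tilde g_i)=0$ unless $i\ge j-c$ for some absolute constant $c$; using $\|f_i\|_\infty\lsm 2^i\|v\|_{\bes{-1}}$ and $\|\tilde g_i\|_\infty\lsm 2^{-i\gamma}\|v\|_{\bes{\gamma}}$ gives $\|f_i\tilde g_i\|_\infty\lsm 2^{i(1-\gamma)}\|v\|_{\bes{-1}}\|v\|_{\bes{\gamma}}$, and since $\gamma>1$ the geometric series $\sum_{i\ge j-c}2^{i(1-\gamma)}$ converges and is $\lsm 2^{j(1-\gamma)}$.

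For a low--high piece, say $\sum_i f_i g_{\le i-2}$, the summand has spectrum in an annulus of size $\sim 2^i$, so applying $P_j$ leaves only finitely many indices $i$ with $|i-j|\le c$. The essential structural point is which factor carries which norm: the negative-order norm must go on the low-frequency factor, since $\|g_{\le i-2}\|_\infty\le\sum_{k\le i-2}\|g_k\|_\infty\lsm\|v\|_{\bes{-1}}\sum_{k\le i-2}2^{k}\lsm 2^i\|v\|_{\bes{-1}}$, the sum converging precisely because the order $-1$ is negative, while $\|f_i\|_\infty\lsm 2^{-i\gamma}\|v\|_{\bes{\gamma}}$. This gives $\|f_i g_{\le i-2}\|_\infty\lsm 2^{i(1-\gamma)}\|v\|_{\bes{-1}}\|v\|_{\bes{\gamma}}$, and summing over the $O(1)$ relevant indices $i\sim j$ yields the bound; the term $\sum_i g_i f_{\le i-2}$ is identical by symmetry. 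Collecting the three contributions and multiplying by the $2^j$ from the divergence completes the proof.

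The computation is essentially routine Littlewood--Paley bookkeeping, so the "main obstacle" is structural rather than technical: one must split the regularity budget so that the \emph{negative} exponent $\dot B^{-1}$ lands on the low-frequency factors (the opposite choice produces a divergent low-frequency sum), and convergence of the high--high sum is exactly the place where the hypothesis $\gamma>1$ enters. I would also note in passing that for the class of smooth solutions under consideration the low-frequency truncations $P_{\le i-2}v$ are well defined, so these homogeneous paraproduct manipulations are legitimate.
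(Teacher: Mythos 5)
Your proof is correct and follows essentially the same route as the paper's: a Bony paraproduct decomposition, the $\dot B^{-1}_{\infty,\infty}$ norm placed on the low-frequency factors, and the hypothesis $\gamma>1$ used exactly where you use it, to sum the high--high tail $\sum_{i\ge j-c}2^{i(1-\gamma)}$. The only (immaterial) difference is that you pull the divergence out of the entire nonlinearity at the start via $(v\cdot\nabla)v=\nabla\cdot(v\otimes v)$, whereas the paper estimates the low--high and high--low terms directly and invokes the divergence form only for the high--high piece, where it is actually needed.
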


\begin{proof}[Proof of Lemma \ref{lem1}]
Although this is utterly standard we give a proof for completeness.
 By paraproduct decomposition,
we have
\begin{align*}
(v\cdot \nabla) v 
& = \sum_{l\in \mathbb Z} (v_{\le l-2} \cdot \nabla) v_l 
+ \sum_{l \in \mathbb Z} (v_l \cdot \nabla) v_{\le l-2}
+ \sum_{l \in \mathbb Z} (v_l \cdot \nabla) \tilde v_l \notag \\
&=: \; A+ B+C,
\end{align*}
where $\tilde v_l = v_{l-1} + v_l + v_{l+1}$. 
Then by frequency localization,  we have
\begin{align*}
\| P_j (A) \|_{\infty}
\lesssim \sum_{|l-j|\le 2} \| v_{\le l-2} \cdot \nabla v_l \|_{\infty} 
\lesssim 2^j \| v\|_{\bes{-1}} \cdot 2^{j(1-\gamma)} \| v \|_{\bes{\gamma}}.
\end{align*}
Similar estimate hold for $B$. Now for the estimate of $C$, note that by using divergence-free
property we can write $(v_l \cdot \nabla)\tilde v_l = \nabla \cdot( v_l \otimes \tilde v_l)$
and this gives
\begin{align*}
\|P_j (C)\|_{\infty}
\lesssim 2^j \sum_{l\ge j-2} 2^{-l} \|v_l\|_{\infty}
\cdot \| \tilde v_l \|_{\infty} \cdot 2^{\gamma l} \cdot 2^{-l(\gamma-1)} 
\lesssim 2^{j(2-\gamma)} \| v \|_{\bes{-1}} \| v\|_{\bes{\gamma}}.
\end{align*}
Here we used the assumption $\gamma>1$.

\end{proof}

\begin{lem} \label{lem2}
Suppose $v=v(t)$ is a smooth solution to \eqref{e1} on some time interval
$[0,T]$ with smooth initial data $v_0$. Let $\gamma>1$. There exists constants
$C_1>0$, $\delta_1>0$ which depend only on $(\gamma,d)$, such that
if 
\begin{align*}
\sup_{0\le t\le T} \| v (t) \|_{\bes{-1}} \le \delta_1,
\end{align*}
then
\begin{align*}
\max_{0\le t \le T} \| v(t) \|_{\bes{\gamma}} \le C_1 \| v_0 \|_{\bes{\gamma}}.
\end{align*}
\end{lem}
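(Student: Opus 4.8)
\medskip
\noindent\textbf{Proof proposal.}
The plan is to run a Duhamel-plus-absorption argument block by block in frequency, using Lemma~\ref{lem1} to handle the nonlinear term and the smallness of $\|v\|_{\bes{-1}}$ to soak it up. First I would write \eqref{e1} in mild form with the Leray projection $\mathbb P$ onto divergence-free vector fields,
\begin{align*}
v(t) = e^{t\Delta} v_0 - \int_0^t e^{(t-s)\Delta}\,\mathbb P\bigl((v\cdot\nabla)v\bigr)(s)\,ds .
\end{align*}
Since $\mathbb P$ is a zeroth-order Fourier multiplier it commutes with $P_j$ and is bounded on frequency-localized $L^\infty$ functions uniformly in $j$ (rescale its convolution kernel). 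Combining this with the heat-kernel bound recalled in the Notation section gives, for every $j\in\mathbb Z$ and $0\le t\le T$,
\begin{align*}
\| P_j v(t)\|_\infty
\lesssim e^{-c2^{2j}t}\|P_j v_0\|_\infty
+\int_0^t e^{-c2^{2j}(t-s)}\,\bigl\| P_j\bigl((v\cdot\nabla)v\bigr)(s)\bigr\|_\infty\,ds .
\end{align*}

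Next I would multiply through by $2^{j\gamma}$ and insert Lemma~\ref{lem1} together with the hypothesis $\|v(s)\|_{\bes{-1}}\le\delta_1$. Writing $M(t):=\max_{0\le s\le t}\|v(s)\|_{\bes{\gamma}}$ and noting $2^{j\gamma}\cdot 2^{j(2-\gamma)}=2^{2j}$, this produces
\begin{align*}
2^{j\gamma}\|P_j v(t)\|_\infty
\lesssim \|v_0\|_{\bes{\gamma}}
+\delta_1 M(t)\int_0^t 2^{2j}e^{-c2^{2j}(t-s)}\,ds
\lesssim \|v_0\|_{\bes{\gamma}}+\delta_1 M(t),
\end{align*}
the point being that $\int_0^t 2^{2j}e^{-c2^{2j}(t-s)}\,ds=c^{-1}(1-e^{-c2^{2j}t})\le c^{-1}$ is bounded uniformly in $j$ and $t$: the factor $2^{2j}$ coming from the $2^{j\gamma}$ weight and the $2^{j(2-\gamma)}$ of Lemma~\ref{lem1} is exactly absorbed by the parabolic smoothing. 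Taking the supremum over $j$ and then the maximum over $t\in[0,T]$ yields $M(T)\le C_0\|v_0\|_{\bes{\gamma}}+C_0\delta_1 M(T)$ for some $C_0=C_0(\gamma,d)$.

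Finally, choosing $\delta_1:=1/(2C_0)$ lets me absorb $C_0\delta_1 M(T)=\tfrac12 M(T)$ into the left side and conclude $M(T)\le 2C_0\|v_0\|_{\bes{\gamma}}$, i.e.\ the lemma with $C_1=2C_0$. The one point requiring care is that this absorption is legitimate only once one knows $M(T)<\infty$; for a smooth solution on $[0,T]$ with smooth data this holds and $t\mapsto M(t)$ is continuous, so one may instead phrase the last step as a continuity/bootstrap argument started from $M(0)=\|v_0\|_{\bes{\gamma}}$. I do not expect a genuine obstacle here: the substance of the lemma is contained entirely in the scaling gain of Lemma~\ref{lem1}, and the remainder is bookkeeping (uniform-in-$j$ estimate of the time integral, mapping properties of $\mathbb P$, and finiteness/continuity of $M$).
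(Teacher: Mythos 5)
Your proposal is correct and follows essentially the same route as the paper: Duhamel on each Littlewood--Paley block, Lemma~\ref{lem1} for the nonlinearity, the uniform-in-$j$ bound $\int_0^t 2^{2j}e^{-c2^{2j}(t-s)}\,ds\lesssim 1$, and absorption via the smallness of $\delta_1$. Your closing remark about needing $M(T)<\infty$ (or a continuity argument) to justify the absorption is a point the paper passes over with ``the result obviously follows,'' and is a worthwhile clarification rather than a deviation.
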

\begin{proof}[Proof of Lemma \ref{lem2}]
Write $v_j = P_j v$. Then 
\begin{align*}
\partial_t v_j -\Delta v_j =- P_j \bigl( \Pi ( (v\cdot \nabla )v ) \bigr),
\end{align*}
where $\Pi$ is the usual Leray projection operator. Then for any $t>0$, by using
Lemma \ref{lem1}, we have
\begin{align*}
\|v_j (t)\|_{\infty}
& \lesssim e^{-c 2^{2j} t}
\| v_j (0) \|_{\infty}
+ \int_0^t e^{-c \cdot 2^{2j} (t-s)}
2^{j(2-\gamma)} \| v(s) \|_{\bes{-1}} \| v(s) \|_{\bes{\gamma}} ds \notag \\
& \lesssim e^{-c 2^{2j} t}
\|v_j(0)\|_{\infty}
+ (1-e^{-c 2^{2j} t}) \cdot 2^{-j\gamma}
\cdot \sup_{0\le s\le t} \| v(s) \|_{\bes{-1}} \cdot
\max_{0\le s \le t} \|v(s) \|_{\bes{\gamma}}.
\end{align*}
This implies that for some constants $\tilde C_1>0$, $\tilde C_2>0$ depending only
on $(\gamma,d)$, 
\begin{align*}
\max_{0\le t \le T}
\| v(t) \|_{\bes{\gamma}}
\le \tilde C_1 \| v_0 \|_{\bes{\gamma}} 
+\tilde C_2 \cdot \sup_{0\le t\le T} \| v(t) \|_{\bes{-1}}
\cdot \max_{0\le t\le T} \| v(t) \|_{\bes{\gamma}}.
\end{align*}
The result obviously follows.
\end{proof}

\begin{proof}[Proof of Theorem \ref{thm1}]
Choose $\gamma=3/2$ and $m_0=\delta_1$ as specified in Lemma \ref{lem2}. Consider
the solution $v=v(t)$ on the time interval $[T-{\epsilon}, T-\eta]$, where $\eta>0$ will
tend to zero. By Lemma \ref{lem2} (regarding $v(T-\epsilon)$ as initial data), we then
obtain uniform estimate on $ \|v\|_{\bes {\gamma}}$ independent of $\eta$. A standard
argument then implies that $v$ must be regular beyond $T$.

\end{proof}

\section*{Acknowledgements}
D. Li was supported by an Nserc grant.
T. Hmidi  was partially supported by the ANR project Dyficolti ANR-13-BS01-0003- 01. 


\begin{thebibliography} {000000000}

\bibitem{AG13}
	K. Abe and Y. Giga, Y,
	{Analyticity of the Stokes semigroup in spaces of bounded functions},
	\emph{Acta Math}, \textbf{211, no. 1} (2013), 1--46.
    
\bibitem{AG14}
    K. Abe and Y. Giga, 
    {The $L^\infty$-Stokes semigroup in exterior domains},
    \emph{J.\ Evol.\ Equ}, \textbf{14, no. 1} (2014), 1--28.
    
\bibitem{BP08}
J. Bourgain and N. Pavlovi\'c,
{Ill-posedness of the Navier-Stokes equations in a critical space in 3D}.
\emph{J. Funct. Anal}, \textbf{255} (2008), 2233--2247.


\bibitem{ChSh10}
A. Cheskidov and R. Shvydkoy,
{The regularity of weak solutions of the 3D Navier-Stokes equations in $B^{-1}_{\infty, \infty}$},
\emph{Arch. Ration. Mech. Anal.}, \textbf{195} (2010), 159--169.

\bibitem{CPS95}
P. Constantin, I. Procaccia and D. Segel, 
{Creation and dynamics of vortex tubes in three dimensional turbulence},
\emph{Phys. Rev E} \textbf{51} (1995),
3207.

\bibitem{DaGr12-3}
R. Dascaliuc and Z. Gruji\'c, 
{Vortex stretching and criticality for the 3D NSE},
\emph{J. Math. Phys.} \textbf{53, no. 11} (2012), 115613, 9 pp.

\bibitem{DLCMS}
H. Dong and D. Li,
Optimal local smoothing and analyticity rate estimates for the generalized Navier-Stokes equations,
\emph{Commun. Math. Sci.}, \textbf{7, no. 1} (2009),  67--80. 

\bibitem{ESS03}
L. Escauriaza, G. Seregin and V. Shverak, 
{$L_{3,\infty}$-solutions of Navier-Stokes equations and backward uniqueness},
\emph{Uspekhi Mat. Nauk.}
\textbf{58} (2003), 211--250.

\bibitem{FGL16}
A. Farhat, Z. Z. Gruji\'c and K. Leitmeyer.
{The space $B^{-1}_{\infty,\infty}$, volumetric
sparseness, and $3D$ NSE}. \emph{Preprint}. arXiv:1603.08763v2


\bibitem{GrKu98}
Z. Gruji\'c and I. Kukavica, 
{Space analyticity for the Navier-Stokes and related equations with initial data in $L^p$},
\emph{J. Funct. Anal.} \textbf{152} (1998), 447--466.

\bibitem{Gr13}
Z. Gruji\'c,
{A geometric measure-type regularity criterion for
solutions to the 3D Navier-Stokes equations}, \emph{Nonlinearity} \textbf{26}
(2013), 289--296.

\bibitem{Gu10}
R. Guberovi\'c, 
{Smoothness of Koch-Tataru solutions to the Navier-Stokes equations
revisited},
\emph{Discrete Cont. Dynamical Systems} \textbf{27, no. 1} (2010), 
231--236.




\bibitem{Le34}
J. Leray, 
{Sur le mouvement d'un liquide visqueux emplissant l'espace}, \emph{Acta Math}. \textbf{63, no. 1} (1934), 193--248.













\end{thebibliography}
\end{document}